\renewcommand\le{\leqslant}
\renewcommand\ge{\geqslant}
\theoremstyle{plain}
\newtheorem{theorem}{Theorem}
\newtheorem{lemma}{Lemma}
\newtheorem*{corollary}{Corollary}
\theoremstyle{definition}
\newtheorem*{definition}{Definition}
\theoremstyle{remark}
\newtheorem*{example}{Example}
\newcommand\R{\mathbb R}
\newcommand\eps{\varepsilon}
\DeclareMathOperator\id{id}
\DeclareMathOperator\diam{diam}
\DeclareMathOperator\dis{dis}
\DeclareMathOperator\level{level}
\title{Chain development of metric compacts\thanks{This research was supported by
Russian Science Foundation (project 14-50-00005)}}
\author{Yu.V.~Malykhin\thanks{Steklov Mathematical Institute},
E.V.~Schepin\thanks{Steklov Mathematical Institute}}
\begin{document}

\maketitle

\paragraph{Notions and basic facts.}

Let $(X,d)$ be a metric space. We call a sequence of points
$x=x_0,x_1,x_2,\ldots,x_n=y$ an $\eps$-chain if $d(x_i,x_{i+1})\le\eps$ for all
$i$. Define \textit{chain distance} $c(x,y)$ as the infimum of $\eps$ such that
there exists an $\eps$-chain from $x$ to $y$.

Chain distance satisfies strong triangle inequality:
$c(x,z)\le\max(c(x,y),c(y,z))$; hence it is ultrametric if it does not
degenerate. Obviously, $c=d$ if $d$ is already ultrametric.

\begin{definition}
A function $f\colon X\to\R$ is called \textit{chain development} if $f$
preserves chain distance:
$$
c(x,y)=\tilde c(f(x),f(y))\quad\mbox{for }x,y\in X,
$$
where $c$ is the chain distance on $(X,d)$ and $\tilde c$ is the chain distance
on the set $f(X)$ with usual distance $\tilde d(s,t)=|s-t|$.
\end{definition}

Chain development was firstly introduced by E.V.~Schepin for finite sets as a
tool for fast hierarchical cluster analysis. 
Note that chain development always exists for finite spaces and can be
effectively constructed using minumim weight spanning tree of the corresponding
graph; see~\cite{SM} and \cite[Section 4]{LemNew} for more details.
An equivalent construction appeared in the paper~\cite{TV} by A.F.~Timan and
I.A.~Vestfid: they proved that points of any finite ultrametric space can be
enumerated in a sequence $x_1,\ldots,x_n$ such that
$c(x_i,x_j)=\max(c(x_i,x_j),c(x_j,x_j))$ for $i<j<k$.

The goal of this paper is to discuss some properties of chain development for
infinite spaces. So, there are compacts with no chain developments, e.g. the
square $C\times C$ of a Cantor set. Necessary and sufficient condition of
existence of chain developments is given below in Theorem 2.

By \textit{diameter of a chain development} $f\colon X\to\R$ we mean 
$\diam f(X)=\sup f(X)-\inf f(X)$. It is proven in~\cite{SM} that for finite
spaces $X$ the diameter of chain developments is determined uniquely. It turns
out that this is not true in general case.

\begin{theorem}
    Let $(X,d)$ be a compact metric space. Then the diameter of chain
    developments (if there are any) is determined uniquely if and only if $X$ is
    countable.
\end{theorem}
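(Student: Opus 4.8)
The plan is to reduce the whole question to a single real-valued invariant attached to each development. For a development $f\colon X\to\R$ put $K=\overline{f(X)}$; then $K$ is a compact subset of $[\inf f(X),\sup f(X)]$ and the complementary open intervals (the \emph{gaps}) account for the rest of the length. The decisive observation is that the gap data is \emph{intrinsic}: cutting $f(X)$ at every gap longer than $\eps$ splits it into exactly the $\eps$-chain components of $X$, so the number of gaps of length $>\eps$ equals $n(\eps)-1$, where $n(\eps)$ is the number of $\eps$-chain components (finite, by compactness and $c\le d$). By the layer-cake formula the total gap length is therefore $\int_0^\infty(n(\eps)-1)\,d\eps=:G$, which depends only on the chain structure of $X$, and
$$\diam f(X)=G+\lambda\bigl(\overline{f(X)}\bigr),$$
where $\lambda$ is Lebesgue measure. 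Hence the diameter is uniquely determined \emph{iff} $\lambda(\overline{f(X)})$ is the same for all developments, and the whole theorem becomes a statement about controlling this measure.

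For the implication \emph{countable $\Rightarrow$ unique} I would show $\lambda(\overline{f(X)})=0$ for every development. First I note that the classes of the relation $c(x,y)=0$ are exactly the connected components of $X$ (one inclusion is ``connected $\Rightarrow$ well-chained''; the other uses that in a compact space components are separated by clopen sets, across which any chain must make a jump bounded below). A countable compact space is zero-dimensional, so all components are singletons and $K=\overline{f(X)}$ is totally disconnected. The analytic heart is then: assuming $\lambda(K)>0$, pick a Lebesgue density point $p$ of $K$, choose $s_k=f(x_k)\to p$ in $f(X)$, and pass to a convergent subsequence $x_{k_j}\to x_*$. Since $c\le d$ we get $\tilde c(s_{k_j},f(x_*))=c(x_{k_j},x_*)\to 0$; as $K$ is totally disconnected, any $f(x_*)\neq p$ would be cut off from the $s_{k_j}$ by a fixed gap, so $f(x_*)=p$ and $p\in f(X)$. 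Every density point therefore lies in $f(X)$, forcing $f(X)$ to be uncountable — contradicting countability of $X$. Thus $\lambda(K)=0$ and $\diam f(X)=G$ for all developments.

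For \emph{uncountable $\Rightarrow$ not unique} I would produce two developments realizing $\lambda=0$ and $\lambda>0$. The value $0$ is always available: collapse each component to a point, obtaining a compact ultrametric quotient $(Y,\bar c)$, and lay $Y$ out on the line with gaps equal to the $\bar c$-distances and no padding, giving a ``thin'' development of diameter $G$. To obtain a ``fat'' one I use uncountability. Either $X$ has a nondegenerate component $E$; being connected it is uncountable, its internal chain distances all vanish, so $f|_E$ only needs gapless image, and I may map $E$ onto a dense subset of an interval of any positive length $L$ (against $L=0$ for the collapse). Or $X$ is totally disconnected, so $Y=X$ is an uncountable compact ultrametric space; then $Y$ contains a Cantor set (its perfect kernel), and along that set I perform a fat-Cantor-type construction, keeping the prescribed gap lengths and nesting but leaving a fixed amount $\lambda_0>0$ of ``matter'' in the limit. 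The resulting image is totally disconnected with gaps of exactly the intrinsic lengths, hence chain-isometric to $Y$, yet of positive measure, so its diameter is $G+\lambda_0>G$.

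The hard part will be the fat construction in the last case: I must arrange the surviving matter so that (i) the image stays totally disconnected and its \emph{only} gaps are the intrinsic ones (correct lengths and tree-nesting), which is what guarantees $\tilde c=\bar c$, and (ii) a definite amount of measure persists. The cleanest bookkeeping is to assign to each ball $B$ of the ultrametric tree an interval $I_B$ whose length is its accumulated gaps plus a matter term $\mu(B)$, impose $\mu(B)=\sum_{\text{children}}\mu(B')$, and push a fixed $\mu_0>0$ down the infinite (perfect) branches, while branches terminating at isolated points receive no matter; verifying that this defines a genuine development is the main technical point. It remains only to dispose of the edge case $G=\infty$, in which every development has infinite diameter and the statement must be read with the convention that this value is ``uniquely determined''.
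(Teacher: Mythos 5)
Your reduction is the right one and coincides with the paper's: the identity $\diam f(X)=G+\lambda(\overline{f(X)})$ with $G$ intrinsic is the paper's equation (1) (your layer-cake integral $\int_0^\infty(n(\eps)-1)\,d\eps$ is just the tree sum $w(X,d)=\sum_v r(v)(n(v)-1)$ written differently), and the countable direction is sound, though more laborious than necessary: the paper first shows that any chain development of a compact is automatically continuous, so $f(X)$ is already closed, and a closed countable set is Lebesgue-null --- your density-point argument is a correct but avoidable substitute for that continuity fact. You also do well to notice the case of an uncountable $X$ all of whose chain-classes are countable (a nondegenerate component), which the paper dispatches only implicitly by restricting to the zero-dimensional case.

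The genuine gap is the uncountable, totally disconnected case. You correctly isolate the crux --- producing a development whose image has positive measure --- but the construction you propose (rebuilding the entire development from the ultrametric tree while threading a fixed amount of ``matter'' down the perfect branches, with additivity over children) is precisely the step you defer as ``the main technical point,'' and it is never carried out; as written the proof is incomplete exactly where it needs to be complete. The idea you are missing makes this step trivial: you do not need to build anything from scratch, you only need to \emph{stretch an existing development}. Given any development $f$ with image $K=f(X)$ (compact and, since $f$ is injective on a zero-dimensional $X$, uncountable), write $K=N\cup P$ with $P$ perfect, let $\varkappa$ be the Cantor-ladder function of $P$ (continuous, nondecreasing, constant on every complementary interval of $P$), and put $\theta(t)=t+c\,\varkappa(t)$ for your favourite $c>0$. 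Then $\theta$ is a continuous increasing map that sends each gap of $K$ to a gap of $\theta(K)$ of the \emph{same} length, so by the gap characterization of chain distance on subsets of $\R$ (property (iv)) the composition $\theta\circ f$ is again a chain development, while $\diam\theta(f(X))=\diam f(X)+c$. This single observation replaces your entire fat-Cantor bookkeeping, and it also shows that your thin/fat dichotomy is more than you need: to refute uniqueness it suffices to perturb one given development, so you never have to re-establish the existence of a measure-zero development either.
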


Throughout this paper by $(Z,d)$ we denote a zero-dimensional compact
metric space. We focus on such spaces because study of chain developments for
arbitrary compacts essentially reduces to the zero-dimensional case.\footnote{One can
identify points of $(X,c)$ with $c(x,y)=0$ to obtain zero-dimensional
ultrametric compact $(Z_X,c)$; a chain development of $(X,d)$ exists if and only
if there is a chain development of ($Z_X,c$).}
We have the following property:
\begin{itemize}
    \item[(i)] $(Z,c)$ is an ultrametric space, i.e. chain distance does not
        degenerate.
\end{itemize}
Indeed, take $x,y\in Z$. The set $\{x\}$ is a connected component, hence $x\in
U\not\ni y$ for some closed open set $U$, so
$$
c(x,y) \ge \min_{\substack{u\in U\\v\in X\setminus U}} d(u,v) > 0.
$$

The transition from metric $d$ to ultrametric $c$ (which can be seen as a
functor) preserves topology:
\begin{itemize}
    \item[(ii)] The identity map $\id\colon Z\to Z$ is a homeomorphism between
        $(Z,d)$ and $(Z,c)$.
\end{itemize}
Indeed, $\id$ is 1-Lipshitz ($c(x,y)\le d(x,y)$), hence it is a continuous
bijection from compact to Hausdorff space, hence a homeomorphism.

\begin{itemize}
    \item[(iii)] Any chain development $f\colon Z\to\R$ is continuous (with usual topology on
        $\R$). Hence, $f(Z)$ is compact and $f$ is a homeomorphism between $Z$ and $f(Z)$.
\end{itemize}
Let $x_n\to x^*$ in $Z$; prove that $t_n := f(x_n)\to t^* =: f(x^*)$.
Suppose that $t_n>t^*+\eps$ for some $\eps>0$.
If there are no points of
$f(Z)$ in $(t^*,t^*+\eps)$, then $\tilde{c}(t_n,t^*)\ge\eps$ (where $\tilde c$
is the chain distance on $f(Z)$).  And if there is some $t=f(x)\in
(t^*,t^*+\eps)$, then $\tilde{c}(t_n,t^*)\ge \tilde{c}(t,t^*)=c(x,x^*)>0$.
In both cases $\tilde{c}(t_n,t^*)\not\to 0$, which contradicts that
$\tilde{c}(t_n,t^*)=c(x_n,x^*)\le d(x_n,x^*)\to 0$. So, $f$ is continuous.

The chain distance on a compact $K\subset\R$ is determined by the
lengths of the intervals of the open set $U_K:=[\min K,\max K]\setminus K$.
\begin{itemize}
    \item[(iv)] Chain distance between points $s,t$ of $K$ is equal to the maximal
        length of the intervals of $U_K$, lying between $s$ and $t$.
\end{itemize}

\paragraph{Existence of chain development.}

There is a well-known correspondence between ultrametric spaces and labeled
trees; here we describe it for our purposes. Let $(X,d)$ be a compact metric
space; we will construct a labeled tree $T(X,d)$ with a vertex set $V$ and a
labeling function $r\colon V\to\R$.  We take an arbitary point $v_0$ as a root
of our tree and assign to it the $c$-diameter of $X$, i.e. $r(v_0)
=\max\limits_{x,y\in X}c(x,y)$.  The relation
$c(x,y)<r(v_0)$ is an equivalence relation; hence, $X$ breaks into finite number of
``clusters'' $Q_1,\ldots,Q_n$ of points with pairwise chain distance less than
$r(v_0)$. Next,
we connect the root with $n$ children, say $v_1,\ldots,v_n$, with $v_j$
corresponding to $Q_j$. The we repeat the construction for each of $Q_j$: we assign
$r(v_j)=\max_{x,y\in Q_j}c(x,y)$, and connect $v_j$ with children
corresponding to the clusters $Q_{j,k}\subset Q_j$ with $c(x,y)<r(v_j)$, $x,y\in
Q_{j,k}$. And so on. The process stops if $c$-diameter of a cluster becomes zero.

So, with each vertex $v$ of $T(X,d)$ we associate:
\begin{itemize}
    \item $n(v)$~--- the number of children of $v$;
    \item $C(v)$~--- the set of children of $v$;
    \item $Q(v)$~--- the cluster of points, corresponding to $v$; e.g.
        $Q(v_0)=X$;
    \item $r(v)$~--- the $c$-diameter of $Q(v)$.
\end{itemize}

\begin{definition}
    The width of the space $(X,d)$ is defined as
    $$
    w(X,d) := \sum_v r(v)(n(v)-1),
    $$
    where the sum is over all vertices of the tree $T(X,d)$.
\end{definition}

\begin{theorem}
    Let $(X,d)$ be a compact metric space. Then there exists a chain
    development $f\colon X\to\R$ if and only if $w(X,d)<\infty$.  Moreover,
    $w(X,d)$ is the minimal possible diameter of a chain development of $X$.
\end{theorem}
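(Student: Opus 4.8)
The plan is to reduce to the zero-dimensional case and then translate everything into statements about the gaps of the image $K=f(X)\subset\R$ via property (iv). By the footnote a chain development of $(X,d)$ exists iff one exists for the associated ultrametric compact $(Z,c)$, and since the tree $T$ and hence the width depend only on $c$, it suffices to prove the theorem for $(Z,c)$; write $w:=w(X,d)=w(Z,c)$. For a vertex $v$ let $T_v$ be the subtree rooted at $v$ and $w(v):=\sum_{u\in T_v}(n(u)-1)r(u)$, so that $w(v)=(n(v)-1)r(v)+\sum_{u\in C(v)}w(u)$ and $w=w(v_0)$. Note that every non-leaf vertex has $n(v)\ge 2$, since $r(v)=\diam_c Q(v)$ is attained and so the splitting of $Q(v)$ by $c(\cdot,\cdot)<r(v)$ is nontrivial.

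For the lower bound (which yields necessity of $w<\infty$) I would take a given chain development $f$ and set $I(v):=[\min f(Q(v)),\max f(Q(v))]$. The key is a non-interleaving lemma: for a vertex $v$ with children $C(v)=\{u_1,\dots,u_n\}$ the intervals $\mathrm{conv}\,f(Q(u_k))$ are pairwise disjoint. Indeed, if two of them overlapped, then (since the images are disjoint) some $s\in f(Q(u_i))$ would lie strictly between two points $t_1,t_2\in f(Q(u_j))$; the gaps of $U_K$ between $t_1$ and $s$ form a subset of those between $t_1$ and $t_2$, so by (iv) $r(v)=\tilde c(t_1,s)\le\tilde c(t_1,t_2)\le r(u_j)<r(v)$, a contradiction. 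Hence the $n(v)$ child-intervals sit in a linear order inside $I(v)$, and because every cross-cluster chain distance equals $r(v)$ while every gap interior to a child has length $<r(v)$, each of the $n(v)-1$ separating gaps has length exactly $r(v)$. These separating gaps are pairwise disjoint over all vertices (a level-$v$ gap lies in $I(v)$ but outside every child interval, whereas deeper gaps lie inside child intervals), so $\diam f(Z)\ge|U_K|\ge\sum_v(n(v)-1)r(v)=w$. Compactness of $f(Z)$ then forces $w<\infty$.

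For existence and the matching upper bound I would construct $f$ directly. Fix an ordering of the children at every vertex and allocate to each $v$ a closed interval $I(v)$ of length $w(v)$, placing the child intervals $I(u)$, $u\in C(v)$, consecutively inside $I(v)$ separated by $n(v)-1$ gaps of length $r(v)$; the identity $w(v)=(n(v)-1)r(v)+\sum_u w(u)$ makes them fit exactly. Each $x\in Z$ determines a branch $v_0,v_1,\dots$ and I set $f(x)=\bigcap_i I(v_i)$. The crucial point is that $w(v_i)\to 0$ along every infinite branch: since every non-leaf vertex contributes at least $r(v_i)$, this is the tail of the convergent series $\sum_v(n(v)-1)r(v)=w<\infty$ over the nested vertex sets $T_{v_i}$ with empty intersection. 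Thus $f(x)$ is a single point, and the estimate $|f(x_n)-f(x)|\le w(v_i)$ for $x_n$ eventually in $Q(v_i)$ (a clopen neighbourhood) shows $f$ is continuous, so $f(Z)$ is compact and (iv) applies to it.

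Finally I would verify that $f$ preserves chain distance and has diameter $w$. If $x\ne y$ branch at $v$ into distinct children $a,b\in C(v)$, then $c(x,y)=r(v)$; since $f(x)\in I(a)$ and $f(y)\in I(b)$ with both in $I(v)$, every gap of $U_K$ between them lies in $I(v)$ and is either a level-$v$ separating gap (length $r(v)$) or a gap strictly interior to some child (length $<r(v)$), while at least one gap of length $r(v)$ separates $I(a)$ from $I(b)$. By (iv), $\tilde c(f(x),f(y))=r(v)=c(x,y)$, so $f$ is a chain development, and $\diam f(Z)=|I(v_0)|=w(v_0)=w$. Combined with the lower bound this identifies $w$ as the minimal diameter and shows existence holds exactly when $w<\infty$. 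I expect the main obstacle to be the infinite-tree bookkeeping in the construction — establishing $w(v_i)\to 0$ and using it to obtain well-definedness, continuity, and compactness of the image at once — since the lower bound is essentially forced once the non-interleaving lemma is proved.
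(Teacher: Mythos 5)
Your proposal is correct and follows essentially the same route as the paper: the lower bound by showing, via property (iv), that each vertex $v$ contributes $n(v)-1$ gaps of length $r(v)$ to $[\min f(Z),\max f(Z)]\setminus f(Z)$, and existence by the recursive allocation of an interval of length $w(v)$ to each vertex with $n(v)-1$ separating gaps of length $r(v)$. You merely spell out some steps the paper leaves implicit (the non-interleaving of child images and the tail argument giving $w(v_i)\to 0$).
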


The construction of the tree uses only the chain distance, so $T(Z,d)=T(Z,c)$
and $w(Z,d)=w(Z,c)$.
On the other hand, the ultrametric structure is fully captured by the tree
$T(Z,d)$. Each point $x\in Z$ lies in some sequence of clusters; hence, it
corresponds to a path in the tree.
\begin{lemma}
    Let $x,y\in Z$. If $x\ne y$, then they lie in diffenent path of the tree,
    and $c(x,y)$ is equal to $r(v)$, where $v$ is the lowest common ancestor of
    $x,y$, i.e. the farthest from root vertex lying on both paths.
\end{lemma}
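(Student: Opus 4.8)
The plan is to reduce the whole statement to two facts about a single vertex of the tree and then to locate the lowest common ancestor of $x$ and $y$. Fix a vertex $v$ with $r(v)>0$. By construction the children of $v$ are exactly the equivalence classes of $Q(v)$ under the relation $a\sim b\iff c(a,b)<r(v)$, which is an equivalence relation precisely because $c$ is an ultrametric (property (i)). Hence for $a,b\in Q(v)$: if $a,b$ lie in the \emph{same} child then $c(a,b)<r(v)$; and if they lie in \emph{different} children then $c(a,b)\ge r(v)$, while $c(a,b)\le r(v)$ because $r(v)$ is the $c$-diameter of $Q(v)$, so in fact $c(a,b)=r(v)$. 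This \emph{different-children identity} is the engine of the proof: once I know that the paths of $x$ and $y$ split at some vertex $v$, the lemma follows at once by applying it at $v$.

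The main obstacle is to show that the two paths really do split at a vertex of finite depth, rather than running together forever down an infinite branch. For this I will first establish a purely metric fact: in a compact ultrametric space $(Z,c)$, for every $\eta>0$ the set of attained distances that are $\ge\eta$ is finite; equivalently, the values of $c$ accumulate only at $0$. I would argue by contradiction. Given infinitely many distinct values $\delta_k\ge\eta$ with $\delta_k\to L\ge\eta>0$, choose $a_k,b_k$ with $c(a_k,b_k)=\delta_k$ and pass to subsequences with $a_k\to a$, $b_k\to b$; this is legitimate by compactness in the $c$-topology, which coincides with the $d$-topology by property (ii), so that $c$ is continuous on $Z\times Z$. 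Taking $k$ large enough that $c(a,a_k),\,c(b,b_k)<\eta\le\delta_k$, the strong triangle inequality forces both $c(a,b)\le\delta_k$ and $\delta_k\le c(a,b)$, pinning every large $\delta_k$ to the single value $c(a,b)$ and contradicting their distinctness. Two consequences I will record: the supremum defining $r(v)$ is attained, so passing from a vertex to any child strictly decreases the label $r$; and along every infinite branch $r(v_i)\to 0$.

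With these tools the assembly is short. Let $x\ne y$ and set $\delta=c(x,y)>0$. A vertex $v'$ lies on the paths of both $x$ and $y$ exactly when $x,y\in Q(v')$, and then $\delta=c(x,y)\le r(v')$; thus every common vertex carries label $\ge\delta$. Since the labels strictly decrease along the common path and only finitely many attained distances are $\ge\delta$, the common path is finite. Let $v$ be its deepest vertex, the lowest common ancestor. By maximality $x$ and $y$ cannot lie in a common child of $v$, hence they lie in different children; the different-children identity then gives $c(x,y)=r(v)$, and the divergence at $v$ shows that $x$ and $y$ follow different paths. The real work is the accumulation lemma of the second paragraph; granting it, the finiteness of the common path — and hence the existence of the lowest common ancestor — is automatic.
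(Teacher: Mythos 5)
Your proof is correct and follows essentially the same route as the paper: compactness forces the paths of $x$ and $y$ to separate at a vertex $v$ of finite depth, and there $c(x,y)=r(v)$ comes from the two inequalities $c(x,y)\le r(v)$ (since $r(v)$ is the $c$-diameter of $Q(v)$) and $c(x,y)\ge r(v)$ (since $x,y$ lie in different children). The only difference is that you actually prove the compactness input --- only finitely many attained distances exceed any $\eta>0$, hence the labels along a branch tend to $0$ and the common path is finite --- whereas the paper simply asserts that the diameters of nested clusters tend to zero.
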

    
\begin{proof}
    Assume $x,y$ lie in the same path $\{v_0,v_1,\ldots\}$ of the tree.
    The compactness of $Z$ implies that diameters of the clusters $Q(v_j)$ tend
    to zero. Then $c(x,y)$ is less than any diameter of the corresponding
    clusters, hence, $c(x,y)=0$, and $x=y$.
    
    Let $v$ be the lowest common ancestor of $x$ and $y$. Then
    $c(x,y)\le r(v)$ by the definition of $r(v)$ and
    $c(x,y)= r(v)$ because $x,y$ lie in different sub-clusters of $Q(v)$.
\end{proof}

Let us prove Theorem 2.
\begin{proof}
Consider the case of zero-dimensional ultrametric compact space $(Z,c)$.
The construction of the set $f(Z)$ is equivalent to the
construction of the tree $T(Z,c)$. Pick an interval $[a,b]$ of length
$w(Z,c)$; we know that
$$
w(Z,c) = \sum_{v\in C(v_0)}w(Q(v),c) + (n(v_0)-1) r(v_0).
$$
One can remove $n(v_0)-1$ disjoint open intervals of length $r(v_0)$ from
$[a,b]$ so that the remaining $n(v_0)$ closed intervals will have lengths
$\{w(Q(v),c)\}_{v\in C(v_0)}$.  Those closed intervals correspond to each of
$Q(v)$ and we proceed with them as with $[a,b]$.

After removal all of the open intervals we arrive at some closed set
$K\subset[a,b]$.
Every point $x\in Z$ corresponds to a path in $T(Z,c)$ and to a nested sequence of
closed intervals with non-empty intersection $t\in K$; we put $f(x)=t$
(intersection is always a point because $\mu(K)=0$).
The proof that $f$ is chain development is straight-forward using Lemma 1 and
property (iv). Note that $\diam f(Z)=w(Z,c)$.

Now, let $f\colon Z\to\R$ be a chain development. 
Define
$$
U_{f(Z)}:=[\min f(Z),\max f(Z)]\setminus f(Z).
$$
We prove that
\begin{equation}\label{diam}
    w(Z,c) = \mu(U_{f(Z)}) = \diam f(Z) - \mu(f(Z)).
\end{equation}
Remind that $r(v_0)$ is the $c$-diameter of $Z$ and the $\tilde{c}$-diameter
of $f(Z)$. It is obvious from (iv) that there are exactly $n(v_0)-1$ 
intervals of $U$ of length $r(v_0)$. Repeating this argument with sets
$f(Q(v))$, $v\in C(v_0)$,
we will count all of the intervals of $U$ and found that each vertex $v$
corresponds to $n(v)-1$ intervals of $U$ of length $r(v)$.  That
implies~(\ref{diam}).  Hence, $w(Z,c)<\infty$ and $\diam f(Z)\ge w(Z,c)$.

The general case follows easily.
\end{proof}

Me will make use of the following standard construction.

\begin{lemma}
    Let $K$ be an uncountable compact in $[a,b]$. Then
    for any $c>0$ there is a continuous increasing function
    $\theta\colon[a,b]\to\R$ such that $\mu(\theta(K))=\mu(K)+c$ and
    $\mu(\theta(I))=\mu(I)$ for any interval $I\subset [a,b]\setminus K$.
\end{lemma}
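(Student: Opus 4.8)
The plan is to write $\theta=\id+g$, where $g\colon[a,b]\to\R$ is a continuous non-decreasing function that is constant on every complementary interval of $K$ and grows by exactly $c$ across $K$. Normalizing by $g(a)=0$, $g(b)=c$, the map $\theta$ is then automatically strictly increasing (its identity part already is) and continuous, hence a homeomorphism of $[a,b]$ onto $[a,b+c]$. On any interval $I\subset[a,b]\setminus K$ the function $g$ is constant, so $\theta$ acts there as a translation and $\mu(\theta(I))=\mu(I)$, which is the second required property.

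To obtain the identity $\mu(\theta(K))=\mu(K)+c$ I would argue by total-length bookkeeping. Since $\theta$ is a strictly increasing continuous bijection onto $[a,b+c]$, it carries the disjoint decomposition $[a,b]=K\sqcup\bigsqcup_n I_n$ (the $I_n$ being the complementary intervals of $K$) to a disjoint decomposition of $[a,b+c]$. Therefore
\[
(b-a)+c=\mu(\theta([a,b]))=\mu(\theta(K))+\sum_n\mu(\theta(I_n))=\mu(\theta(K))+\bigl((b-a)-\mu(K)\bigr),
\]
using $\mu(\theta(I_n))=\mu(I_n)$ from the previous step; solving yields $\mu(\theta(K))=\mu(K)+c$.

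It remains to construct $g$, and this is the only genuinely nontrivial point, the place where uncountability of $K$ is essential. I would realize $g$ as the distribution function $g(x)=\nu([a,x])$ of a non-atomic Borel measure $\nu$ with $\nu([a,b])=c$ and support contained in $K$. Continuity of $g$ is equivalent to $\nu$ having no atoms, and constancy of $g$ on the complementary intervals is equivalent to $\nu$ being concentrated on $K$. Such a $\nu$ exists exactly because $K$ is uncountable: by the Cantor--Bendixson theorem an uncountable compact set contains a non-empty perfect subset, which in turn contains a homeomorphic copy of the standard Cantor set; pushing the (non-atomic) Cantor measure forward onto this copy and rescaling it to total mass $c$ produces the desired $\nu$. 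For countable $K$ no such $\nu$ exists, since every measure supported on a countable set is purely atomic, which is precisely why the hypothesis cannot be weakened.

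Combining these pieces gives $\theta$ with both required properties. The main obstacle is the construction of the non-atomic measure on $K$; once the perfect-set and Cantor-measure argument supplies it, the two measure identities follow from the elementary bookkeeping sketched above.
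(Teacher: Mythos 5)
Your proposal is correct and follows essentially the same route as the paper: the paper writes $K=N\cup P$ with $P$ perfect (Cantor--Bendixson), takes a Cantor-ladder function $\varkappa$ for $P$ and sets $\theta(t)=t+c\varkappa(t)$, which is exactly your $\theta=\id+g$ with $g$ the distribution function of a non-atomic mass-$c$ measure carried by a perfect subset of $K$. You additionally spell out the total-length bookkeeping for $\mu(\theta(K))=\mu(K)+c$, which the paper leaves implicit.
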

\begin{proof}
    Write $K$ as $N\cup P$, where $N$ is countable and $P$ is perfect. Let
    $\varkappa\colon [a,b]\to[0,1]$ be an analog of the Cantor's ladder for the
    set $P$; we need that $\varkappa$ is continuous and non-decreasing, $\varkappa([a,b])=[0,1]$ and
    $\left.\varkappa\right|_I\equiv\mathrm{const}$ for any interval $I\subset
    [a,b]\setminus P$. It remains to take $\theta(t)=t+c\varkappa(t)$.
\end{proof}

Now we are ready to prove Theorem 1.

\begin{proof}
We consider only the zero-dimensional case.  If $Z$
is countable, then $\mu(f(Z))=0$ and from~(\ref{diam}) we get $\diam
f(Z)=w(Z,c)$.
Suppose $Z$ is uncountable. Take any chain development $f\colon Z\to\R$ and
apply Lemma 2 to $K=f(Z)$ with some $c>0$. Then $\theta\circ f$ gives us a chain
development with another diameter.
\end{proof}

It appears that the diameter of a chain development of an uncountable compact
may be any number greater or equal than $w(X,d)$.

\begin{example}
    Consider the set $C\times C$, where $C\subset[0,1]$ is the usual Cantor
    set. Let $d((x_1,y_1),(x_2,y_2)) = \max(|x_1-x_2|,|y_1-y_2|)$ for $(x_i,y_i)\in
    C\times C$.
    Then there is no chain development for the space $(C\times C,d)$.
\end{example}
\begin{proof}
Let us compute $w(C\times C,d)$. In the tree $T(C\times C,d)$ each node has
four children; for example, the children of the root correspond to the
clusters
\begin{equation}\label{comp}
\left(C\cap \left[\frac{2i}{3},\frac{2i+1}{3}\right]\right)\times
\left(C\cap \left[\frac{2j}{3},\frac{2j+1}{3}\right])\right),\quad i,j=0,1.
\end{equation}
We have $r(v_0)=1/3$ for the root $v_0$ and
$r(u)=\frac13 r(v)$ for each children $u$ of $v$, by self-similarity
of $C$. Hence, $w(C\times C,d)=\sum_{k=0}^\infty 4^k3^{-k} = \infty$
and the claim follows from Theorem 2.
\end{proof}

\paragraph{Measure of disconnectivity.}

\begin{definition}
    Let $(X,d)$ be a metric space. Define \textit{measure of disconnectivity} of
    $(X,d)$ as
    $$
    \dis(X,d) = \inf_{x_i\sim y_i}\sum_{i}d(x_i,y_i),
    $$
    where the infimum is taken over sequences (finite or infinite) or pairs
    $(x_i,y_i)\in X\times X$, such that the space $(X,d)$ with identified points
    $x_i\sim y_i$ is a connected topological space.
\end{definition}

This notion is closely related to the minimum spanning trees of graphs. Indeed,
if $X$ is finite, then $\dis(X,d)$ is equal to the weight of a minimum spanning
tree for $X$ (we regard points of $X$ as vertices and take weights of edges equal
to the correponding distances).

\begin{theorem}
    Let $(X,d)$ be a compact metric space. Then $\dis(X,d)=w(X,d)$.
\end{theorem}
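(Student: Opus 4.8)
The plan is to prove $\dis(X,d)=w(X,d)$ by establishing the two inequalities separately. First I would reduce to the zero-dimensional case. By the footnote, identifying points at chain distance zero produces $(Z_X,c)$, and I must check that this identification does not change either quantity: $w$ is invariant because $T(X,d)=T(Z_X,c)$ by construction, and $\dis$ should be invariant because identifying points at chain distance zero changes the total cost by an arbitrarily small amount (the identified points are infinitesimally close in the limit) while not affecting connectivity of quotients. Having reduced to $(Z,c)$, I would then relate both $\dis$ and $w$ to the intervals removed in the construction of a chain development.

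For the inequality $\dis(Z,c)\le w(Z,c)$, I would use the chain development $f\colon Z\to K\subset\R$ of minimal diameter constructed in the proof of Theorem~2, where $\diam f(Z)=w(Z,c)$ and $\mu(K)=0$. The complement $U_K=[\min K,\max K]\setminus K$ consists of open intervals whose total length is exactly $w(Z,c)$ by equation~(\ref{diam}). For each removed interval, its two endpoints lie in $K$ and correspond via $f^{-1}$ to a pair of points in $Z$; identifying all such endpoint-pairs ``fills in'' every gap and makes $f(Z)$, hence $Z$, into a connected interval. The sum of the corresponding distances $c(x_i,y_i)$ equals the sum of the interval lengths, which is $w(Z,c)$. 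Some care is needed because there may be infinitely many intervals and because $c(x_i,y_i)$ must be computed from property~(iv); but since the endpoints of a removed interval of length $\ell$ have chain distance exactly $\ell$ (that interval is the largest gap between them), the total cost is $\sum_i\ell_i=w(Z,c)$, giving the desired bound after a limiting argument.

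For the reverse inequality $\dis(Z,c)\ge w(Z,c)$, I would argue that any admissible family of identifications $\{x_i\sim y_i\}$ rendering $Z$ connected must have total cost at least $w(Z,c)$. The intuition is that the tree $T(Z,c)$ records, at each vertex $v$, that $Q(v)$ splits into $n(v)$ sub-clusters separated by chain distance $r(v)$; to connect these sub-clusters one must introduce identifications that bridge the separation, and each of the $n(v)-1$ ``merges'' needed at vertex $v$ costs at least $r(v)$. Formally I would set up an accounting argument: for each vertex $v$, the identifications whose two endpoints lie in distinct children-clusters of $Q(v)$ must number at least $n(v)-1$ (to connect $n(v)$ pieces one needs at least that many bridging edges, as in a spanning tree), and each such identification $x_i\sim y_i$ with endpoints in different children of $v$ has $c(x_i,y_i)\ge r(v)$. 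Summing the contribution $r(v)(n(v)-1)$ over all $v$ recovers $w(Z,c)$; the key is to charge each identification to the appropriate vertex without double-counting, which is handled by charging a bridging pair at its lowest common ancestor as given by Lemma~1.

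The main obstacle will be making the lower-bound accounting rigorous in the presence of infinitely many vertices and infinitely many identifications. The finite case is exactly the minimum spanning tree statement already noted after the definition of $\dis$, so the real content is the limiting/compactness argument: I must ensure that a connected quotient forces, at every level of the tree simultaneously, enough bridging identifications, and that the lower bounds $r(v)(n(v)-1)$ can be summed over the infinite tree without the infimum in the definition of $\dis$ slipping below $w$. I expect to handle this by approximating $Z$ with finite quotients obtained by collapsing all clusters below a threshold level, applying the finite spanning-tree result to each approximation, and passing to the limit as the threshold decreases, using monotone convergence of $\sum_v r(v)(n(v)-1)$ over the truncated trees to $w(Z,c)$.
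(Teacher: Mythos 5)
Your lower bound $\dis(X,d)\ge w(X,d)$ follows the same route as the paper: collapse all clusters at a fixed level to obtain a finite ultrametric quotient, invoke the finite minimum-spanning-tree fact, and let the level go to infinity using monotone convergence of the partial sums $\sum_{\level(v)<N}r(v)(n(v)-1)$ to $w$. That part is sound in outline.

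The upper bound has a genuine gap. The quantity $\dis(X,d)$ is defined with the original metric: it is $\inf\sum_i d(x_i,y_i)$, not $\inf\sum_i c(x_i,y_i)$. Your construction identifies the $f$-preimages of the endpoints of the removed intervals of a minimal chain development, and you correctly compute $\sum_i c(x_i,y_i)=\sum_i\ell_i=w$; but since $d\ge c$ and the two can differ, this bounds only the ``chain-distance cost'', not $\sum_i d(x_i,y_i)$. The discrepancy is real and cannot be repaired by choosing the chain development more carefully. Take $X=\{A,B,C,D\}$ with $d(A,B)=d(A,C)=d(A,D)=1$ and $d(B,C)=d(B,D)=d(C,D)=2$: all chain distances equal $1$, $w=3$, and every chain development places the four points at $0,1,2,3$; whatever the ordering, some adjacent pair lies inside $\{B,C,D\}$ and has $d=2$, so your endpoint pairs always cost at least $4>w$. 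The identifications achieving $3$ form a star $(A,B),(A,C),(A,D)$ --- a spanning tree, not a path, and a path is all that the gap structure of a subset of $\R$ can give you. This is exactly where the paper works differently: at each vertex $v$ it selects $n(v)-1$ pairs between distinct children clusters realizing $d(x_i,y_i)=r(v)$ (possible by compactness, via a spanning tree of the graph whose edges join clusters containing points at $d$-distance $r(v)$), and then proves connectedness of the quotient by a clopen-set argument. The same conflation of $d$ with $c$ also undermines your reduction to $(Z_X,c)$: passing to the ultrametric reflection replaces the cost function $d$ by $c$, so $\dis(X,d)\le\dis(Z_X,c)$ is precisely the non-trivial inequality you need, not a harmless normalization; your stated justification (that identified points are ``infinitesimally close'') is false, since $c(x,y)=0$ does not imply $d(x,y)$ is small.
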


We need one more notation for vertices of a tree $T(X,d)$: by 
$\level(v)$ we denote the length of the path from the root to $v$.

\begin{proof}
Note that for finite sets $X$ the theorem follows from~\cite{SM}. We prove there
that $w(X,d)$ is the diameter of any chain development of $X$, and it is clear from
the proof that it is equal to the weight of a minimum spanning tree of $X$.

Let us prove that $\dis(X,d)\ge w(X,d)$. Pick some $N\in\mathbb N$ and consider
all clusters $Q(v)$ with either $\level(v)=N$ or $\level(v)<N$ and $r(v)=0$. We
denote by $(X_N,c_N)$ the ultrametric space, which comes from $(X,c)$ when we
identify points in each cluster.  To make $X$ connected, we should connect all
of the mentioned clusters, so $\dis(X,d)\ge\dis(X_N,c_N)$.
For finite sets, $\dis=w$, so $\dis(X_N,c_N)=w(X_N,c_N)$.
Obviously,
$T(X_N,c_N)$ is obtained from $T(X,d)$ by deleting vertices of level $>N$, and
assigning $r(v)=0$ for the new leaves. So
$$
w(X_N,c)=\sum_{\level(v)<N} r(v)(n(v)-1)\to w(X,c)\quad\mbox{as
$N\to\infty$,}
$$
hence $\dis(X,d)\ge w(X,d)$.

Let us prove that $\dis(X,d)\le w(X,d)$. For each vertex $v$ we connect 
the clusters $\{Q(u)\}_{u\in C(v)}$ to each other by picking appropriate pairs $(x_i,y_i)\in
C(u')\times C(u'')$. It is easy to show that one can make the set of that
clusters connected using pairs with $\sum d(x_i,y_i)=r(v)(n(v)-1)$. In total, the
sum is $w(X,d)$.
Let us prove that the image $\widetilde X$ of $X$ after
projection $\pi\colon X\to\widetilde X$ of idendtification $x_i\sim y_i$, is connected. If $\widetilde
U\subset\widetilde X$ is non-emply, open and closed, then $U=\pi^{-1}\widetilde U\subset X$
is also non-empty, open and closed; besides that, if $x_i\sim y_i$ and $x_i\in U$, then $y_i\in
U$. It remains to prove that $U=X$.

If $x\in U$, then $x\in Q(v)\subset U$ for some $v$. Indeed, $\delta:=\min_{u\in
U,v\in X\setminus U}d(u,v)>0$, so if we take $Q(v)\ni x$ with sufficiently small
diameter, $r(v)<\delta$, then $Q(v)\subset U$. So, $U$ is a union of
clusters; since $U$ is compact, it is a finite union. Now one can prove via
induction on $N$ that for all $v$ of $\mathrm{level}\ge N$ either $Q(v)\subset
U$ or $Q(v)\cap U=\varnothing$. Indeed, $U$ is a union of finite number of
clusters, so this is true for large $N$.
Let us make an induction step from $N$ to $N-1$. Suppose there is $Q(v)$,
$\level(v)=N-1$, with $Q(v)\cap U\ne\varnothing$. We have $Q(v)=\sqcup_{u\in
C(u)}Q(u)$ so $Q(u')\cap U\ne\varnothing$ for some $u'\in C(v)$. As
$\level(u')=N$, $Q(u')\subset U$. There is some $u''\in C(v)$ and a pair $x_i\sim y_i$,
$(x_i,y_i)\in Q(u')\times Q(u'')$. As $x_i\in U$, we have $y_i\in U$ and
$Q(u'')\subset U$. As all the clusters $\{Q(u)\}_{u\in C(v)}$ are connected, we
will prove that $Q(u)\subset U$ for all $u\in C(v)$, i.e. $Q(v)\subset U$. The
claim follows.

Finally, $Q(v_0)\subset U$ so $U=X$ and $\tilde X$ is connected.
\end{proof}

\begin{corollary}
    For any metric compact $(X,d)$ three quantities are equal:
    \begin{itemize}
        \item the minimal diameter of a chain development of $X$;
        \item the width $w(X,d)$;
        \item the measure of disconnectivity $\dis(X,d)$.
    \end{itemize}
\end{corollary}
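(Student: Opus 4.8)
The plan is to obtain the corollary directly from Theorems~2 and~3, so the argument is essentially bookkeeping among the three quantities rather than new work. First I would record that Theorem~3 gives $\dis(X,d)=w(X,d)$ for \emph{every} compact metric space, with no finiteness hypothesis; this already identifies the second and third quantities in all cases, including the case where both are infinite. Thus it remains only to relate the first quantity to $w(X,d)$.

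Next I would dispose of the first quantity by splitting on whether $w(X,d)$ is finite. If $w(X,d)<\infty$, Theorem~2 asserts both that a chain development exists and that $w(X,d)$ is the minimal possible diameter of such a development; hence the first and second quantities coincide, and together with the previous step all three agree. If $w(X,d)=\infty$, Theorem~2 tells us that no chain development exists, so the collection over which the minimal diameter is taken is empty. Interpreting this infimum by the usual convention as $+\infty$, the first quantity equals $+\infty=w(X,d)=\dis(X,d)$, and the three-fold equality again holds. The whole statement then follows by combining the two cases.

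The only point that needs a word of care is precisely this empty-collection convention in the divergent case, where the phrase ``minimal diameter of a chain development'' refers to an infimum over nothing; once that convention is made explicit there is no remaining obstacle, since both nontrivial identifications have already been established in the two preceding theorems. I would therefore present the proof in two short sentences: invoke Theorem~3 for the equality $\dis=w$, and invoke Theorem~2 (with the convention above) for the equality between the minimal diameter and $w$.
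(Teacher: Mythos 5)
Your proposal is correct and is exactly the argument the paper intends: the corollary is stated as an immediate consequence of Theorem~2 (minimal diameter equals $w(X,d)$, with nonexistence precisely when $w=\infty$) and Theorem~3 ($\dis=w$), and the paper gives no further proof. Your explicit handling of the empty-infimum convention in the divergent case is a reasonable clarification but does not change the substance.
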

Note that first two quantities definitely have ultrametric nature, but this
is not obvious for the third quantity.


\begin{thebibliography}{XXXXX}

\bibitem{SM}
Yu.V.~Malykhin, E.V.~Shchepin,
``Chain Development'',
\textit{Proceedings of the Steklov Institute of Mathematics}, \textbf{290}:1
(2015), 300--305.

\bibitem{LemNew}
V.A.~Lemin,
``Finite ultrametric spaces and computer science'',
\textit{Categorical Perspectives}. – Birkhäuser Boston, 2001. p.219--241.

\bibitem{TV}
A.F.~Timan, I.A.~Vestfrid,
``Any separable ultrametric space can be isometrically imbedded in $\ell_2$'',
\textit{Functional Analysis and Its Applications}, \textbf{17}:1 (1983), 70--71.

\end{thebibliography}
\end{document}